\let\uml\"
\newcommand{\Gal}{\operatorname{Gal}}
\newcommand{\F}{\mathbf{F}}
\newcommand{\Q}{\mathbf{Q}}
\newcommand{\ddef}{\colonequals}
\newcommand{\disc}{\operatorname{disc}}
\newcommand{\lcm}{\operatorname{lcm}}
\newcommand{\Res}{\operatorname{Res}}
\theoremstyle{plain}
\newtheorem{thm}[equation]{Theorem}
\newtheorem{cor}[equation]{Corollary}
\newtheorem{prop}[equation]{Proposition}
\theoremstyle{remark}
\newtheorem{rmk}[equation]{Remark}
\newtheorem{remark}[equation]{Remark}
\newtheorem{exm}[equation]{Example}
\begin{document}

\title{On the discriminants of truncated logarithmic polynomials}

\author{John Cullinan \and Rylan Gajek-Leonard}

\address{Department of Mathematics, Bard College, Annandale-On-Hudson, NY 12504, USA}
\email{cullinan@bard.edu}
\urladdr{\url{http://faculty.bard.edu/cullinan/}}

\address{Department of Mathematics,
Union College,
Schenectady, NY 12308, USA}
\email{gajekler@union.edu}
\urladdr{\url{https://www.math.union.edu/~gajekler/}}

\keywords{}

\begin{abstract}
We provide evidence for a conjecture of Yamamura that the truncated logarithmic polynomials 
\[
F_n(x) = 1 + x  + \frac{x^2}{2} + \cdots + \frac{x^n}{n}
\]
have Galois group $S_n$ for all $n \geq 1$.
\end{abstract}

\maketitle

\section{Introduction}

In \cite{ijnt}, the authors consider two families of polynomials:
\begin{align*}
F_n(x) &= 1 + x + \frac{x^2}{2} + \cdots + \frac{x^n}{n},\text{ and} \\
f_n(x) &= 1 - \frac{x^2}{2!} + \cdots + \frac{(-1)^nx^{2n}}{(2n)!}.
\end{align*}
These are the truncated Maclaurin polynomials of the functions $F(x) = 1 - \log (1-x)$ and $f(x) = \cos(x)$, respectively.  Motivated by Schur's pioneering result on the truncated exponential polynomials \cite{schur}, they work through the irreducibility and Galois groups of these polynomials.  In the case of $F_n$, they prove that $\Gal_\Q(F_n)$ contains $A_n$ for all $n$.  

However, their claim that $\Gal_\Q(F_n) \simeq S_n$ ``depending on the residue of the integer number $n$ modulo 4'' appears to be incorrect, as pointed out by Yamamura in his review of the paper for MathSciNet \cite{yam}.  In fact, Yamamura therein conjectures that $\Gal_\Q(F_n) \simeq S_n$ for all $n \geq 1$.  The purpose of this note is to address this conjecture and highlight the apparent difficulty of proving it in general.

By a Newton polygon argument in \cite{ijnt}, the $F_n$ are irreducible over $\Q$ with $\Gal_\Q(F_n) \supseteq A_n$ for all $n \geq 1$.  Thus, Yamamura's conjecture amounts to proving that the discriminant of $F_n$ is not a rational square once $n >1$.  Let us now set some notation.

The discriminant of a univariate polynomial $P(x)$ of degree $n$ and leading coefficient $a_n$ is given by
\begin{align} \label{res}
\disc P = (-1)^{\binom{n}{2}} a_n^{-1} \Res(P,P'). 
\end{align}
Computing the derivative
\[
F_n'(x) = 1 + x + x^2 + \cdots + x^{n-1},
\]
whose roots are the nontrivial $n$-th roots of unity,  the discriminant we seek is therefore
\begin{align} \label{disc}
\disc(F_n) =  (-1)^{\binom{n}{2}} n \prod_{\lbrace \theta : F_n'(\theta)=0 \rbrace} F_n(\theta).
\end{align}

Let $L_n = \lcm \lbrace 1,2,\dots,n \rbrace$. Then 
\[
F_n(x) = \frac{\widetilde{F}_n(x)}{L_n},
\]
where $\widetilde{F}_n(x)$ is a polynomial with integer coefficients.  The relations
\[
\theta^n = 1 \qquad \text{and} \qquad 1 + \theta + \theta^2 + \cdots + \theta^{n-1} = 0
\]
hold when $\theta$ is an $n$-th root of unity, which allow us to simplify $\widetilde{F}_n(\theta)$; we have
\begin{align} \label{simplified}
\widetilde{F}_{n}(\theta)=\sum_{k=0}^{n-2}a_k\theta^k \qquad \text{where}\qquad a_k=\begin{cases}L_n + \frac{L_n}{n} - \frac{L_n}{n-1} &\text{if $k=0$}\\
\frac{L_n}{k} - \frac{L_n}{n-1}&\text{if $k>0$}.
\end{cases}
\end{align}

Substituting (\ref{simplified}) into (\ref{disc}) and simplifying yields the following expression for the discriminant of $F_n(x)$:
\begin{align} \label{disc_form}
\disc(F_n) =  (-1)^{\binom{n}{2}} \frac{n}{L_n^{n-1}} \prod_{\lbrace \theta : F_n'(\theta)=0 \rbrace} \widetilde{F}_n(\theta).
\end{align}
Note that the product
\begin{align} \label{prod} 
\mathcal{P}_n \ddef \prod_{\lbrace \theta : F_n'(\theta)=0 \rbrace} \widetilde{F}_n(\theta)
\end{align}
is an integer.  As we will see below, the main difficulty in proving Yamamura's conjecture is understanding the prime factorization of $\mathcal{P}_n$.  We pause for an illustrative example.

\begin{exm} \label{n=9}
Let $n=9$.  Then we compute
\begin{align*}
\disc(F_9) &= \frac{9}{L_9^8} \prod_{\lbrace \theta : F_9'(\theta)=0 \rbrace} \widetilde{F}_9(\theta) = \left( \frac{3}{2^{12}3^{8}5^4 7^4} \right)^2 \mathcal{P}_9,
\end{align*}
hence $\disc(F_9)$ is a rational square if and only if $\mathcal{P}_9$ is.  In this case, we compute
\[
\mathcal{P}_9 = 1531\cdot  3137311 \cdot 113564970051005791,
\]
and these prime factors do not appear to be predictable \emph{a priori}.
\end{exm}

The above example  illustrates the main difficulty in proving $\disc(F_n) \not \in \Q^{\times 2}$ when $n$ is a square: the factor $\mathcal{P}_n$ is typically not divisible by primes dividing $n$, so we cannot use ``small'' primes to our advantage.  And, additionally, the prime divisors of $\mathcal{P}_n$ may be quite difficult or impossible to characterize.

In \cite{ijnt} the authors show that if $n \equiv 0,2,3 \pmod{4}$, then $\disc(F_n) \not \in \Q^{\times 2}$.  They then prove that if $n \equiv 1 \pmod{4}$ and $n$ is prime, then $\disc(F_n) \not \in \Q^{\times 2}$. In some cases, our results can be viewed as a companion to theirs: we were unable to easily verify some of their claims, and so we give independent proofs of these discriminant results, as well as some mild generalizations.  

\begin{thm} \label{mainthm1}
If $n \equiv 0,2,3 \pmod{4}$, or if $n \equiv 1 \pmod{4}$ and is the odd power of a prime, then $\disc(F_n) \not \in \Q^{\times 2}$.
\end{thm}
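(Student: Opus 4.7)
The plan is to use formula~(\ref{disc_form}) and handle each residue class modulo~$4$ by either a sign obstruction ($\disc(F_n) < 0$) or a prime $p$ at which $v_p(\disc(F_n))$ is odd. Since $\Gal_\Q(F_n) \supseteq A_n$, either outcome forces $\Gal_\Q(F_n) = S_n$.

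For $n \equiv 2, 3 \pmod 4$, we have $(-1)^{\binom{n}{2}} = -1$. I would verify $\mathcal{P}_n > 0$ by observing that nontrivial $n$-th roots of unity pair into complex conjugates (contributing positive factors $|\widetilde{F}_n(\theta)|^2$), while the only self-conjugate root (arising when $n$ is even) is $\theta = -1$, for which $\widetilde{F}_n(-1) = L_n \sum_{k=2}^{n} (-1)^k/k > 0$ by pairwise grouping of the alternating series. Hence $\disc(F_n) < 0$. For $n \equiv 0 \pmod 4$, I would invoke Bertrand's postulate to pick a prime $p \in (n/2, n)$. Then $p \nmid n$, $p^2 > n$ (so $v_p(L_n) = 1$), and $k=p$ is the only multiple of $p$ in $\{1,\ldots,n\}$. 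Consequently every coefficient of $\widetilde{F}_n$ has $v_p = 1$ except the coefficient of $x^p$, which has $v_p = 0$, so
\[
\widetilde{F}_n(x) \equiv (L_n/p)\, x^p \pmod p.
\]
For any nontrivial $n$-th root of unity $\theta$ in the unramified extension $\Z_p[\zeta_n]$, the residue $(L_n/p)\theta^p$ is a unit, giving $v_p(\widetilde{F}_n(\theta)) = 0$ and hence $v_p(\mathcal{P}_n) = 0$. Substitution into (\ref{disc_form}) yields $v_p(\disc(F_n)) = -(n-1)$, which is odd.

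The main technical work lies in the case $n = p^r$ with $r$ odd and $n \equiv 1 \pmod 4$ (which forces $p \equiv 1 \pmod 4$). For each primitive $p^j$-th root of unity $\zeta$, $1 \leq j \leq r$, I would compute $v_p(\widetilde{F}_n(\zeta))$ in the totally ramified extension $\Q_p(\zeta_{p^j})/\Q_p$. Using $\zeta^n = 1$ to combine the $k=0$ and $k=n$ terms of $\widetilde{F}_n(\zeta) = \sum_k (L_n/k)\zeta^k$ produces a ``constant'' contribution $L_n(n+1)/n$, whose valuation is $v_p(n+1) = 0$ since $p \nmid p^r + 1$. Every intermediate term $(L_n/k)\zeta^k$, $1 \leq k \leq n-1$, satisfies $v_p = r - v_p(k) \geq 1$. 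Hence $v_p(\widetilde{F}_n(\zeta)) = 0$; taking norms and summing over $j$ gives $v_p(\mathcal{P}_n) = 0$, whence $v_p(\disc(F_n)) = r(2-n)$, odd because both $r$ and $n-2$ are odd.

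The decisive mechanism in each $p$-adic case is a single dominant term modulo the uniformizer forcing $v_p(\mathcal{P}_n) = 0$. The main obstacle --- and the reason this strategy falls short of Yamamura's full conjecture --- is that when $r$ is even (e.g., $n=9$) the quantity $r(2-n)$ becomes even and no parity obstruction remains at $p$, while for composite $n \equiv 1 \pmod 4$ with multiple prime factors the clean modular reduction of $\widetilde{F}_n$ simply breaks down. This is precisely the phenomenon highlighted by Example~\ref{n=9}.
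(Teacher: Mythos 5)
Your proof is correct and follows essentially the same route as the paper: a sign argument for $n \equiv 2,3 \pmod{4}$, an auxiliary prime $\ell \in (n/2,n)$ with $v_\ell(L_n)=1$ forcing $v_\ell(\mathcal{P}_n)=0$ for $n \equiv 0 \pmod{4}$, and the $p$-adically dominant folded constant term $L_n(n+1)/n$ for $n=p^r$, giving $v_p(\disc(F_n))=r(2-n)$. The only (cosmetic) difference is that by reducing $\widetilde{F}_n(x)$ itself rather than the simplified form (\ref{simplified}), you get by with a plain Bertrand prime in $(n/2,n)$, whereas the paper must take $\ell \in (n/2,n-2)$ to control the $L_n/(n-1)$ terms and therefore treats $\disc(F_4)$ by direct computation.
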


Regarding the prime divisors of $\mathcal{P}_n$, we can say the following as a generalization of Example \ref{n=9}.

\begin{thm} \label{sqthm}
If $n = p^{2e}$ for an odd prime $p$ and a positive integer $e$, then $\mathcal{P}_n$ is coprime to $p$.
\end{thm}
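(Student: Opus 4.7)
The plan is a direct resultant calculation modulo $p$. Since $F_n'(x) = 1 + x + \cdots + x^{n-1}$ is monic of degree $n-1$, the product $\mathcal{P}_n$ in $(\ref{prod})$ equals the resultant $\Res\bigl(F_n'(x), \widetilde{F}_n(x)\bigr)$. Because $F_n'$ remains monic modulo $p$, this resultant commutes with reduction mod $p$, so the task becomes to reduce $\widetilde{F}_n(x)$ modulo $p$ and compute the resultant there.

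The coefficients of $\widetilde{F}_n(x) = L_n + L_n x + \tfrac{L_n}{2}x^2 + \cdots + \tfrac{L_n}{n}x^n$ have easily controlled $p$-adic valuations. Since $n = p^{2e}$ is itself the largest power of $p$ not exceeding $n$, I have $v_p(L_n) = 2e$. The constant term then has $v_p(L_n) = 2e \geq 1$, and for each $1 \leq k \leq n-1$ the coefficient $L_n/k$ has $v_p = 2e - v_p(k) \geq 1$ since $v_p(k) \leq 2e-1$; only the leading coefficient $L_n/n$ is a $p$-adic unit. Hence
\[
\widetilde{F}_n(x) \equiv \tfrac{L_n}{n}\, x^n \pmod p.
\]

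Finally, applying the product formula for the resultant together with the fact that each root $\theta$ of $F_n'(x)$ satisfies $\theta^n = 1$, I obtain
\[
\mathcal{P}_n \equiv \Res\!\bigl(F_n'(x),\, \tfrac{L_n}{n}\, x^n\bigr) = \bigl(\tfrac{L_n}{n}\bigr)^{n-1} \prod_\theta \theta^n = \bigl(\tfrac{L_n}{n}\bigr)^{n-1} \pmod p,
\]
and since $L_n/n$ is coprime to $p$, so is $\mathcal{P}_n$. I do not expect a serious obstacle: the substantive step is the valuation accounting in the second paragraph, which uses the exponent $2e$ precisely so that every non-leading coefficient of $\widetilde{F}_n$ acquires positive $v_p$. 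The simplified form $(\ref{simplified})$ offers an alternative route---there the $a_k$ for $k \geq 1$ are likewise all divisible by $p$ and $a_0 \equiv L_n/n \pmod p$---but the resultant viewpoint avoids reducing modulo $F_n'$.
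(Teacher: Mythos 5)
Your proof is correct and is essentially the paper's own argument (given there as the proof of Theorem \ref{prime_power}, which applies verbatim to this statement since the congruence condition on $p$ plays no role in the coprimality claim): the point in both is that every coefficient of $\widetilde{F}_n$ except the single $p$-adic unit one vanishes modulo $p$, so that $\mathcal{P}_n \equiv (L_n/n)^{n-1} \not\equiv 0 \pmod{p}$. The only cosmetic difference is that you work with $\widetilde{F}_n(x)$ directly and invoke $\theta^n = 1$ at the end, whereas the paper first passes to the reduced form (\ref{simplified}), where the unit coefficient sits in degree $0$ instead of degree $n$.
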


However, the main contribution of this paper is to give substantially new infinite families of integers $n \equiv 1 \pmod{4}$ for which $\disc (F_n)$ is not a rational square.

\begin{thm} \label{mainthm2} Let $m$ be a positive integer. 
For all but finitely many primes $q$, if $mq\equiv1\pmod{4}$ then $\disc(F_{mq}) \not \in \Q^{\times 2}$.
\end{thm}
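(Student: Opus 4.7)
The plan is to pin down the $q$-adic valuation of $\disc(F_{mq})$ for primes $q$ large compared to $m$ and show it is odd. For $q > m$, the only multiples of $q$ in $\{1, \dots, mq\}$ are $q, 2q, \dots, mq$, each with $v_q = 1$ exactly, so $v_q(L_{mq}) = 1$. Writing $L_{mq} = qM$ with $\gcd(M, q) = 1$, equation~(\ref{disc_form}) gives
\[
v_q(\disc F_{mq}) = v_q(mq) - (mq - 1) v_q(L_{mq}) + v_q(\mathcal{P}_{mq}) = 2 - mq + v_q(\mathcal{P}_{mq}).
\]
Because $mq \equiv 1 \pmod{4}$ forces $mq$ odd, $2 - mq$ is odd; it therefore suffices to show $v_q(\mathcal{P}_{mq}) = 0$ for all but finitely many such primes $q$.

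The main technical input is the coefficient congruence
\[
\widetilde{F}_{mq}(x) \equiv M \cdot (F_m(x) - 1)^q \pmod{q}, \qquad \text{valid for } q > m.
\]
This is proved by inspecting coefficients: $v_q(L_{mq}/k) \ge 1$ unless $q \mid k$, so modulo $q$ only the terms indexed by $k = qj$, $1 \le j \le m$, survive, yielding $\widetilde{F}_{mq}(x) \equiv M\sum_{j=1}^m (x^q)^j/j = M(F_m(x^q) - 1) \pmod{q}$. The Frobenius identity $F_m(x^q) \equiv F_m(x)^q \pmod{q}$ followed by the freshman's dream $(F_m(x) - 1)^q \equiv F_m(x)^q - 1 \pmod{q}$ then delivers the claim.

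Next, I compute $\mathcal{P}_{mq} = \Res(F'_{mq}, \widetilde{F}_{mq})$ modulo $q$. From $x^{mq} - 1 \equiv (x^m - 1)^q \pmod{q}$ one has $F'_{mq}(x) \equiv (x - 1)^{q-1} F'_m(x)^q \pmod{q}$. Multiplicativity of the resultant, the identity $\Res(x - 1, P) = P(1)$, and Fermat's little theorem together give
\[
\mathcal{P}_{mq} \equiv M^{m-1} H_m^{q-1} C_m \pmod{q},
\]
where $H_m = F_m(1) - 1 = 1 + \tfrac{1}{2} + \cdots + \tfrac{1}{m}$ and $C_m = \Res(F'_m, F_m - 1)$ are fixed rational numbers depending only on $m$. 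For all but finitely many primes $q$---namely, $q > m$ with $q$ dividing neither the numerator of $H_m$ nor the numerator of $C_m$---the right-hand side is a nonzero element of $\Fq$. Hence $v_q(\mathcal{P}_{mq}) = 0$ and $v_q(\disc F_{mq}) = 2 - mq$ is odd, so $\disc(F_{mq}) \notin \Q^{\times 2}$.

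The main obstacle in executing this plan is establishing the coefficient identity of the second paragraph, which crucially relies on $v_q(L_{mq}) = 1$ and hence on the hypothesis $q > m$; after that, the computation of $\mathcal{P}_{mq} \bmod q$ is routine resultant bookkeeping. A secondary point that must be checked is the nonvanishing $C_m \ne 0$---equivalently, that no nontrivial $m$-th root of unity $\omega$ satisfies $F_m(\omega) = 1$; this appears to always hold and can be verified by a direct analysis of $F_m(x) - 1$ on cyclotomic fields, though I have not found a slick uniform proof.
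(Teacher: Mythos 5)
Your proposal is correct and follows essentially the same route as the paper: your $H_m$ and $C_m$ are exactly the paper's $Y(m)$ and $X(m)$ (since $C_m=\Res(F_m',F_m-1)=\prod_{k=1}^{m-1}\left(F_m(\omega^k)-1\right)$), and your congruence $\mathcal{P}_{mq}\equiv M^{m-1}H_m^{q-1}C_m\pmod{q}$ is the paper's $\mathcal{P}_{mq}\equiv (L_{mq}/q)^{mq-1}X(m)^qY(m)^{q-1}\pmod{q}$, with the Frobenius/resultant formalism serving only as tidier bookkeeping for the same reduction of the coefficients $a_k$ modulo $q$. The one caveat you flag, $C_m\neq 0$, is likewise left implicit in the paper, which tacitly assumes $X(m)$ is a nonzero rational so that ${\rm E}_m$ is finite, so your argument is complete to the same degree as the published one.
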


\begin{rmk} For each integer $m$ one can explicitly compute the finite set of primes for which the above theorem does not apply. It is then a matter of computation to check that $\disc(F_n) \not \in \Q^{\times 2}$ for each of these exceptional primes. We give examples to demonstrate this in the final section of  this article. 
\end{rmk}

Before proving our results, we briefly survey the calculation of discriminants for number-theoretic purposes.

\section{Commentary on Discriminants, Resultants, and Arithmetic}

The systematic study of the algebraic properties of families of orthogonal polynomials appears to originate in two papers of Holt \cite{holt1,holt2}, in which he studies the irreducibility of certain Legendre polynomials.  Holt's methods were generalized significantly by Ille and Schur \cite{schur}, leading to extensive studies of the irreducibility and Galois theory of many families of hypergeometric polynomials (\emph{e.g.},~Jacobi, Laguerre, Chebyshev, Hermite).  Assuming irreducibility, there are Newton Polygon methods that will allow one to prove that the Galois group contains $A_n$.  Deciding whether or not the Galois group is all of $S_n$ is then a matter of determining when the discriminant is a rational square.  

The applications of orthogonal polynomials to arithmetic go beyond computational Galois theory.  In particular, if ${\rm ss}_p(t)\in \F_p[t]$ denotes the supersingular polynomial (whose roots are the supersingular $j$-invariants of elliptic curves over $\F_p$), then ${\rm ss}_p(t)$ is the reduction modulo $p$ of a certain Jacobi polynomial.  For more background and for other interesting lifts of ${\rm ss}_p(t)$ to $\Q$, see \cite{kz,morton}.  A conjecture of Mahburg and Ono \cite{mo} states that the ``Jacobi lifts'' are irreducible with maximal Galois group, and in \cite{cgl} we gave evidence for this conjecture using many of the techniques originally described by Schur.  

In the case of the classical families of orthogonal polynomials, the discriminants tend to have nice expressions.  A prototypical example is the case of the truncated exponential polynomials
\[
e_n(x) = \sum_{j=0}^n \frac{x^j}{j!},
\]
whose discriminants are given by $\disc (e_n) = \left(-1\right)^{\binom{n}{2}} \left(n! \right)^n$.  This formula is a key feature of Coleman's proof that $\Gal_\Q \left(e_n\right)$ always contains $A_n$ and equals $S_n$ if and only if $4 \nmid n$ \cite{coleman}.  We note that if 
\[
{\displaystyle L_{n}^{(\alpha )}(x)=\sum _{i=0}^{n}(-1)^{i}{n+\alpha  \choose n-i}{\frac {x^{i}}{i!}}}
\]
denotes the $n$th Generalized Laguerre Polynomial (GLP), then $e_n(x) = L_n^{(-1-n)}(x)$.

Ultimately, the reason that certain hypergeometric polynomials (including all of the ones mentioned above) have such tidy discriminant formulas is that they satisfy a Sturm-Liouville differential equation, relating the derivative of a member of the family to another member, and that the polynomials solve certain recurrence relations; for example, in the case of the GLP we have
\[
\frac{{\rm d}}{{\rm d}x} L_{n}^{(\alpha )}(x)= -L_{n-1}^{(\alpha+1)}(x).
\]
The recurrence relations, together with the differential equations, typically allow for explicit discriminant formulas, via (\ref{res}), which are amenable to arithmetic study.   

This is not the case with the truncated logarithmic polynomials $F_n(x)$ -- the derivative of $F_n(x)$ does not belong to the same family (as in the case of the classical orthogonal polynomials).  However, the roots of $F'_n(x)$ are the nontrivial $n$-th roots of unity, hence are independently equipped with a good deal of algebraic and arithmetic symmetry.  Thus, from the point of view of discriminants, our family $\lbrace F_n(x) \rbrace$ is more amenable to computation than a ``random'' family of polynomials, but is harder to work with than a family of classical orthogonal polynomials.  This manifests itself in (\ref{disc_form}) where we understand some of the prime factorization quite well (viz., $n/L_n^{n-1}$) and the rest ($\mathcal{P}_n$) not at all.

\section{Preliminary Results}

We start with a concise proof that $\disc (F_n) \not \in \Q^{\times 2}$ when $n \not \equiv 1 \pmod{4}$.

\begin{prop} \label{easyprop}
If $n \equiv 0,2,\text{or } 3 \pmod{4}$, then $\disc (F_n) \not \in \Q^{\times 2}$.
\end{prop}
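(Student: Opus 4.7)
My plan is to split on the parity of $\binom{n}{2}$. For $n \equiv 2, 3 \pmod 4$, I would prove that $\disc(F_n) < 0$, hence is not in $\Q^{\times 2}$. A parity check gives $(-1)^{\binom{n}{2}} = -1$ in these residues, so by (\ref{disc_form}) it suffices to show $\mathcal{P}_n = \prod_\theta \widetilde{F}_n(\theta) > 0$. Pairing the roots of $F'_n$ into complex conjugates, each pair contributes $\widetilde{F}_n(\theta) \widetilde{F}_n(\bar\theta) = |\widetilde{F}_n(\theta)|^2 > 0$ (nonzero because $F_n$ is irreducible). For $n \equiv 3 \pmod 4$ this accounts for all $n-1$ roots. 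For $n \equiv 2 \pmod 4$ the real root $\theta = -1$ is left over, and $\widetilde{F}_n(-1) = L_n F_n(-1)$ is positive: writing $F_n(-1) = 1 - T_n$ with $T_n = \sum_{k=1}^{n} (-1)^{k+1}/k$, the alternating-series estimate gives $T_n < \log 2$ for $n$ even, so $F_n(-1) > 1 - \log 2 > 0$.

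For $n \equiv 0 \pmod 4$ the discriminant is positive, so I would pass to $p$-adic valuations. By Bertrand's postulate, for $n \geq 4$ there exists a prime $p$ with $n/2 < p < n$; such a $p$ is odd and satisfies $v_p(L_n) = 1$, $v_p(n) = 0$, and the only $k \in \{1, \dots, n\}$ divisible by $p$ is $k = p$ itself. It follows that every coefficient of $\widetilde{F}_n$ other than that of $x^p$ is divisible by $p$, so
\[
\widetilde{F}_n(x) \equiv (L_n/p)\, x^{p} \pmod p,
\]
with $L_n/p$ a unit mod $p$. Using Vieta's formula $\prod_\theta \theta = (-1)^{n-1}$ for the roots of $F'_n$, the resultant reduces to
\[
\mathcal{P}_n \equiv (L_n/p)^{n-1} \bigl((-1)^{n-1}\bigr)^{p} \pmod p,
\]
which is nonzero. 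Hence $v_p(\mathcal{P}_n) = 0$, and by (\ref{disc_form}),
\[
v_p(\disc F_n) = v_p(n) + v_p(\mathcal{P}_n) - (n-1)\, v_p(L_n) = -(n-1),
\]
which is odd because $n$ is even. Therefore $\disc(F_n) \notin \Q^{\times 2}$.

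The main step requiring care is the mod-$p$ reduction in the second case: one must verify that passing from the integer $\mathcal{P}_n = \prod_\theta \widetilde{F}_n(\theta)$ to its image mod $p$ corresponds to evaluating the reduced monomial $(L_n/p)\, x^p$ at the reductions of the roots of $F'_n$. This is clean because $F'_n$ is monic, so $\mathcal{P}_n$ equals a Sylvester determinant in the coefficients of $F'_n$ and $\widetilde{F}_n$ whose reduction mod $p$ is immediate.
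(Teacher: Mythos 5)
Your proof is correct and follows essentially the same route as the paper: a sign argument for $n \equiv 2,3 \pmod{4}$ (negative discriminant since $\mathcal{P}_n>0$), and for $n \equiv 0 \pmod{4}$ an auxiliary prime $p \in (n/2,\,n)$ forcing $v_p(\mathcal{P}_n)=0$ and hence $v_p(\disc F_n) = -(n-1)$ odd. The only (harmless) difference is that you reduce the unreduced degree-$n$ form of $\widetilde{F}_n$ modulo $p$, so Bertrand's interval $(n/2,n)$ covers $n=4$ uniformly, whereas the paper reduces via \eqref{simplified}, takes a prime in $(n/2,\,n-2)$ to avoid the $L_n/(n-1)$ terms, and checks $\disc(F_4)$ by direct computation.
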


\begin{proof}
If $n \equiv 2,3 \pmod{4}$ then $\disc(F_n) <0$, by (\ref{disc}). For $n \equiv 0\pmod{4}$, we first compute $\disc (F_4) = 725/432 \not \in \Q^{\times 2}$.  If $n \geq 8$, then there exists a prime number in the interval $(n/2, n-2)$.   Fix such a prime $\ell$ and observe that $v_\ell(n) = 0$ and that  $v_{\ell}(L_n) = 1$. Thus $v_\ell \left(n/L_n^{n-1} \right)$ is odd.  For a nontrivial $n$-th root of unity $\theta$, consider $\widetilde{F}_n(\theta)$.  Reducing (\ref{simplified}) modulo $\ell$, we have $\widetilde{F}_n(\theta) \equiv \left(L_n/\ell\right)\theta^\ell \pmod{\ell}$, whence $\mathcal{P}_n$ is coprime to $\ell$:
\[
\mathcal{P}_n =  \prod_{\lbrace \theta : F_n'(\theta)=0 \rbrace} \widetilde{F}_n(\theta) \equiv \prod_{\lbrace \theta : F_n'(\theta)=0 \rbrace} \frac{L_n}{\ell} \theta^\ell \pmod{\ell} \equiv \left( \frac{L_n}{\ell}\right)^{n-1}\cdot (-1)  \not \equiv 0 \pmod{\ell}.
\] 
Thus $v_\ell(\disc (F_n))$ is odd and so $\disc (F_n) \not \in \Q^{\times 2}$.
\end{proof}

This brings us to the case $n \equiv 1 \pmod{4}$, which is the most mysterious.  We start by generalizing \cite[Thm.~3.1]{ijnt} to when $n$ is a prime power.

\begin{thm} \label{prime_power}
Let $p \equiv 1 \pmod{4}$, $e$ a positive integer, and $n=p^e$.  Then $\mathcal{P}_n$ is coprime to $p$.
\end{thm}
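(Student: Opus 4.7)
The plan is to exploit the explicit coefficient formula (\ref{simplified}) and show that, when $n=p^e$, every coefficient $a_k$ of $\widetilde{F}_n(\theta)$ with $k\geq 1$ vanishes modulo $p$, while the constant term $a_0$ is a unit at $p$. Once this is established, $\mathcal{P}_n$ becomes a product of units modulo a prime of $\Z[\zeta_n]$ above $p$, so it cannot be divisible by $p$.

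Concretely, I would first compute $p$-adic valuations. Since $n=p^e$, we have $v_p(L_n)=e$. For $1\leq k\leq n-2$ the bound $k<p^e$ forces $v_p(k)\leq e-1$, so $v_p(L_n/k)=e-v_p(k)\geq 1$; and since $\gcd(n-1,p)=1$, also $v_p(L_n/(n-1))=e\geq 1$. Hence $p\mid a_k$ for every $k\geq 1$. For $k=0$, the three summands of $a_0=L_n+L_n/n-L_n/(n-1)$ have $p$-adic valuations $e$, $0$, and $e$ respectively, the middle term being a unit at $p$ because $v_p(n)=e$. Therefore $a_0\equiv L_n/n\not\equiv 0\pmod{p}$.

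Next, I would fix any prime $\mathfrak{P}$ of $\Z[\zeta_n]$ above $p$. Since $a_k\in p\Z\subseteq\mathfrak{P}$ for every $k\geq 1$ and $\theta^k\in\Z[\zeta_n]$, reducing (\ref{simplified}) yields $\widetilde{F}_n(\theta)\equiv a_0\pmod{\mathfrak{P}}$ for every nontrivial $n$-th root of unity $\theta$. Because $a_0\notin\mathfrak{P}$, each factor in the product defining $\mathcal{P}_n$ is a unit modulo $\mathfrak{P}$, whence $\mathcal{P}_n\notin\mathfrak{P}$. Since $\mathcal{P}_n$ is a rational integer and $\mathfrak{P}\cap\Z=p\Z$, this is the desired conclusion $p\nmid\mathcal{P}_n$.

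The hypothesis $p\equiv 1\pmod{4}$ plays no essential role in this sketch; the argument gives coprimality for all odd primes $p$, with the residue-class restriction presumably recorded only because Proposition \ref{easyprop} already handles the other cases. I do not anticipate a serious obstacle: the only thing to verify carefully is the valuation bookkeeping, which rests on the clean observation that $1\leq k\leq n-2$ forces $v_p(k)<e$, and therefore $p\mid L_n/k$ uniformly across the sum.
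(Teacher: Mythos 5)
Your proposal is correct and follows essentially the same route as the paper: both arguments show via the valuation $v_p(L_n)=e$ and the bound $v_p(k)\leq e-1$ that $a_k\equiv 0\pmod{p}$ for $k\geq 1$ while $a_0\equiv L_n/n\not\equiv 0\pmod{p}$, so that each factor $\widetilde{F}_n(\theta)$ reduces to the unit $L_n/n$ and $\mathcal{P}_n\equiv(L_n/n)^{n-1}\not\equiv 0\pmod{p}$. Your added care in working modulo a prime $\mathfrak{P}$ of $\Z[\zeta_n]$ above $p$, and your observation that the hypothesis $p\equiv 1\pmod{4}$ is not actually used, are both accurate but do not change the substance of the argument.
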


\begin{proof}
We start by reducing the  factors $\widetilde{F}_n(\theta)$ of $\mathcal{P}_n$ modulo $p$:
\begin{align*} 
a_0 &= L_n + \frac{L_n}{n} - \frac{L_n}{n-1} \equiv  \frac{L_n}{n} \pmod{p} \not \equiv 0 \pmod{p}, \text{ and} \\
a_k  &= \frac{L_n}{k} - \frac{L_n}{n-1} \equiv 0 \pmod{p}
\end{align*}
for all $k = 1,\dots n-2$, due to the fact that $0 \leq v_p(k) \leq e-1$ for all such $k$.  Therefore 
\[
\mathcal{P}_n  = \prod_{\lbrace \theta : F_n'(\theta)=0 \rbrace} \widetilde{F}_n(\theta) \equiv \prod_{\lbrace \theta : F_n'(\theta)=0 \rbrace} \frac{L_n}{n} \equiv \left( \frac{L_n}{n} \right)^{n-1} \not \equiv 0 \pmod{p},
\]
as claimed.
\end{proof}

\begin{cor}
Let $p \equiv 1 \pmod{4}$, $e$ a positive integer, and $n=p^e$.  If $e$ is odd, then $\disc(F_n) \not \in \Q^{\times 2}$.  If $e$ is even, then  $\disc(F_n) \not \in \Q^{\times 2}$ if and only if  $\mathcal{P}_n \not \in \Q^{\times 2}$.
\end{cor}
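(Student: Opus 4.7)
The plan is to combine Theorem \ref{prime_power} with a careful analysis, prime by prime, of the $q$-adic valuations of the rational number $n/L_n^{n-1}$ appearing as a factor in (\ref{disc_form}). First, because $p \equiv 1 \pmod{4}$ we have $n = p^e \equiv 1 \pmod{4}$, so $(n-1)/2$ is even, hence $\binom{n}{2}$ is even and the sign $(-1)^{\binom{n}{2}}$ in (\ref{disc_form}) is $+1$. Thus
\[
\disc(F_n) = \frac{p^e}{L_n^{p^e-1}} \, \mathcal{P}_n.
\]

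Next I would compute valuations. For any prime $q \neq p$ with $q \leq n$, the contribution $v_q(L_n^{n-1}) = v_q(L_n)(p^e - 1)$ is even, because $p$ is odd and hence $p^e - 1$ is even. For the prime $p$ itself, the largest power of $p$ at most $p^e$ is $p^e$, so $v_p(L_n) = e$, and therefore
\[
v_p\!\left(\frac{p^e}{L_n^{p^e-1}}\right) = e - e(p^e - 1) = e(2 - p^e).
\]
Since $p^e - 2$ is odd, this valuation has the same parity as $e$.

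Combining with the fact, from Theorem \ref{prime_power}, that $v_p(\mathcal{P}_n) = 0$: when $e$ is odd, $v_p(\disc(F_n))$ is odd, so $\disc(F_n) \not\in \Q^{\times 2}$; when $e$ is even, $n/L_n^{n-1}$ is a rational square (every $v_q$ being even), so $\disc(F_n)$ and $\mathcal{P}_n$ differ multiplicatively by a rational square, and the stated equivalence follows. No real obstacle arises beyond this bookkeeping; the substantive content of the corollary lies in the even case, where deciding whether $\mathcal{P}_n$ is itself a perfect square appears genuinely difficult, as underscored by Example \ref{n=9}.
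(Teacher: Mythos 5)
Your proof is correct and follows essentially the same route as the paper: invoke Theorem \ref{prime_power} to get $v_p(\mathcal{P}_n)=0$, compute $v_p(n/L_n^{n-1}) = e - e(n-1)$, and read off the parity. You supply slightly more bookkeeping than the paper does (the sign $(-1)^{\binom{n}{2}}=+1$ and the evenness of $v_q(L_n^{n-1})$ for $q\neq p$), which the paper leaves implicit, but the substance is identical.
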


\begin{proof}
Note that whether $e$ is even or odd, we have 
\[
v_p \left(\disc(F_n) \right) = v_p(n) - (n-1)v_{p}(L_n) + v_p(\mathcal{P}_n) = e - e(n-1) +0,
\]
by Theorem \ref{prime_power}.  If $e$ is odd, then $v_p\left(\disc(F_n) \right)$ is odd (because $n \equiv 1 \pmod{4}$) and hence $\disc(F_n) \not \in \Q^{\times 2}$.  If $e$ is even, then $n/L_n^{n-1}$ is a rational square, hence $\disc(F_n)$ is a rational square if and only if $\mathcal{P}_n$ is.
\end{proof}

By Example \ref{n=9}, we cannot expect to make much general progress on the case when $n \equiv 1 \pmod{4}$ when $n$ is a square, due to the unpredictable prime factorization of $\mathcal{P}_n$.    For the rest of the paper, we assume $n \equiv 1 \pmod{4}$ is not a square.

\section{The Case $n=mq$} 

Fix a positive integer $m$, let $\omega$ be a primitive $m$th root of unity, and define 
\begin{align*}
X(m) &\ddef \prod_{k=1}^{m-1}  \frac{1}{m} + \omega^k + \frac{1}{2}\omega^{2k} + \cdots + \frac{1}{(m-1)}\omega^{(m-1)k}  \\
Y(m) &\ddef 1 + \frac{1}{2} + \cdots + \frac{1}{m}.
\end{align*}
Observe that both $X(m)$ and $Y(m)$ are rational numbers. Consider the set 
$$
{\rm E}_m = \{\text{primes $\ell$}\mid\text{$v_\ell(X(m))> 0$, $v_\ell(Y(m))> 0$, and $m \ell \equiv 1 \pmod{4}$}\}
$$
consisting of all prime divisors of $X(m)$ and $Y(m)$ whose product with $m$ is congruent to $1\pmod{4}$.

\begin{remark} While ${\rm E}_m$ can be computed explicitly for fixed $m$, we expect that saying anything in general about this set could be difficult. Even without considering $X(m)$, the set ${\rm E}_m$ relies on understanding $p$-adic properties of the sequence $Y(m)$ of harmonic numbers, which is known to be a hard problem (see \cite{boyd}). 
\end{remark}

\begin{thm} \label{mainthm2restated}
For all primes $q$ such that $q \not \in {\rm E}_m$, $\gcd(q,m) = 1$, and $n \ddef mq \equiv 1 \pmod{4}$,
we have $\disc(F_n) \not \in \Q^{\times 2}$.
\end{thm}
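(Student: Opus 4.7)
The plan is to show that for $q \notin E_m$ satisfying the hypotheses, the $q$-adic valuation $v_q(\disc(F_n))$ is odd, which immediately forces $\disc(F_n) \notin \Q^{\times 2}$. Starting from (\ref{disc_form}), I would split the computation into the prefactor $n/L_n^{n-1}$ and the arithmetic part $\mathcal{P}_n$. Since $\gcd(q,m)=1$ we have $v_q(n)=1$, and since $n \equiv 1 \pmod{4}$ is odd, the quantity $(n-1)v_q(L_n)$ is automatically even; hence $v_q(n/L_n^{n-1}) = 1 - (n-1)v_q(L_n)$ is odd for every allowed $q$, independently of the size of $v_q(L_n)$. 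It therefore suffices to show $v_q(\mathcal{P}_n) = 0$.

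To this end, I would reduce each factor $\widetilde{F}_n(\theta)$ modulo $q$ using (\ref{simplified}). For all but finitely many $q$ (namely $q>m$), $v_q(k)\in\{0,1\}$ for $1\leq k\leq n-2$, so the only coefficients $a_k$ surviving modulo $q$ are $a_0 \equiv L_n/n$ and $a_{jq} \equiv L_n/(jq)$ for $j=1,\dots,m-1$. Pulling out the common factor $L_n/q$ yields
\[
\widetilde{F}_n(\theta) \equiv \frac{L_n}{q}\left(\frac{1}{m} + \sum_{j=1}^{m-1} \frac{\theta^{jq}}{j}\right) \pmod{q}.
\]

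Next I would assemble the product over the $n-1$ nontrivial $n$-th roots of unity by partitioning them according to the map $\theta \mapsto \theta^q$. Since $n=mq$, exactly $q-1$ nontrivial $n$-th roots of unity satisfy $\theta^q=1$, and for each nontrivial $m$-th root of unity $\omega$ there are exactly $q$ nontrivial $n$-th roots with $\theta^q=\omega$. The $q-1$ factors with $\theta^q=1$ each reduce to $\frac{1}{m}+\sum_{j=1}^{m-1} \frac{1}{j} = Y(m)$, producing $Y(m)^{q-1}$; the $q$-fold fiber over each nontrivial $\omega$ assembles via the definition of $X(m)$ into $X(m)^q$. This yields the key congruence
\[
\mathcal{P}_n \equiv \left(\frac{L_n}{q}\right)^{n-1} Y(m)^{q-1} X(m)^q \pmod{q}.
\]
Because $q \notin E_m$, both $X(m)$ and $Y(m)$ are $q$-adic units, so the right-hand side is nonzero modulo $q$, forcing $v_q(\mathcal{P}_n)=0$.

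The main obstacle I anticipate is the combinatorial bookkeeping in the third paragraph: correctly partitioning the nontrivial $n$-th roots of unity by their $q$-th powers, counting each fiber with multiplicity $q$, and recognizing the resulting product as $Y(m)^{q-1} X(m)^q$. A secondary point is that the exceptional set $E_m$ must in practice be arranged to exclude \emph{every} prime dividing $X(m)$ or $Y(m)$, and implicitly also the primes $q\leq m$ where the reduction step above becomes more intricate; all such exclusions are finitary and are absorbed into the ``all but finitely many primes'' phrasing of Theorem \ref{mainthm2}.
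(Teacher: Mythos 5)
Your proposal is correct and follows essentially the same route as the paper's proof: odd $q$-adic valuation of the prefactor $mq/L_{mq}^{mq-1}$, reduction of the coefficients $a_k$ modulo $q$ so that only $a_0$ and the $a_{jq}$ survive, and identification of the resulting product over nontrivial $n$-th roots of unity as $X(m)^q Y(m)^{q-1}$, a $q$-adic unit for $q \not\in {\rm E}_m$. Your explicit fiber-counting for $\theta \mapsto \theta^q$ is just a more detailed writeup of the paper's one-line identity, and your observation that the reduction step implicitly requires $q > m$ (so that $v_q(L_{mq})=1$) is a caveat the paper itself only concedes in its concluding remarks.
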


\begin{proof}
Recall that 
\begin{equation}\label{discFmq}
\disc(F_{mq}) = \frac{mq}{L_{mq}^{mq-1}} \mathcal{P}_{mq}.
\end{equation}
Since $mq \equiv 1 \pmod{4}$, we have that $v_q\left( \frac{mq}{L_{mq}^{mq-1}} \right)$ is odd.  Thus, if $v_q(\mathcal{P}_{mq})$ is even, then $\disc(F_{mq}) \not \in \Q^{\times 2}$.  We now compute $\mathcal{P}_{mq} \pmod{q}$.

Let $\theta_k = \exp(2 \pi i/mq)^k$ and $\omega = \exp(2 \pi i/m)$.  Then 
\[
\mathcal{P}_{mq}  = \prod_{k=1}^{mq-1} \left( \sum_{j=0}^{mq-2} a_j\theta_k^j \right).
\]
Reducing each coefficient modulo $q$ gives us
\begin{align*}
\mathcal{P}_{mq}  &\equiv  \prod_{k=1}^{mq-1}  \left(a_0 + a_q\theta_k^q + a_{2q}\theta_k^{2q} + \cdots + a_{(m-1)q}\theta_k^{(m-1)q} \right) \pmod{q} \\
&\equiv \prod_{k=1}^{mq-1}  \left(a_0 + a_q\omega^k + a_{2q}\omega^{2k} + \cdots + a_{(m-1)q}\omega^{k(m-1)} \right) \pmod{q} \\
&\equiv \prod_{k=1}^{mq-1}  \left(\frac{L_{mq}}{mq} + \frac{L_{mq}}{q}\omega^k + \frac{L_{mq}}{2q}\omega^{2k} + \cdots + \frac{L_{mq}}{(m-1)q}\omega^{k(m-1)} \right) \pmod{q} \\
&\equiv \underbrace{\left(\frac{L_{mq}}{q}\right)^{mq-1}}_{\not \equiv 0 \pmod{q}} \prod_{k=1}^{mq-1}  \left(\frac{1}{m} + \frac{\omega^k}{1}+ \frac{\omega^{2k}}{2} + \cdots + \frac{\omega^{k(m-1)}}{(m-1)} \right) \pmod{q}.
\end{align*}

As $k$ ranges over $1,\dots,mq-1$, the product in the last congruence above 
can be rewritten as 
\begin{align*}
\prod_{k=1}^{mq-1}  \left(\frac{1}{m} + \frac{\omega^k}{1}+ \frac{\omega^{2k}}{2} + \cdots + \frac{\omega^{k(m-1)}}{(m-1)} \right) &= \left(X(m)Y(m)\right)^{q-1}X(m) \\&= X(m)^qY(m)^{q-1}.
\end{align*}
Thus
\begin{align*}
\mathcal{P}_{mq}&\equiv \left(\frac{L_{mq}}{q}\right)^{mq-1} X(m)^qY(m)^{q-1} \pmod{q},
\end{align*}
which is coprime to $q$ by the hypothesis that  $X(m)$ and $Y(m)$ are both coprime to $q$.  Thus $v_q(\mathcal{P}_{mq}) =0$ and so $\disc(F_{mq}) \not \in \Q^{\times 2}$. 
\end{proof}

\section{Examples and Conclusions}

We conclude the paper with several examples and observations.  We start by setting $m=p$ a prime number in Theorem \ref{mainthm2restated}.  If we fix $p$, then as long as both $X(p)$ and $Y(p)$ are coprime to $q >p$,  we can conclude that $\disc(F_{pq}) \not \in \Q^{\times 2}$.  For the remaining values of $q \in {\rm E}_p$, we can check by hand as long as it is computationally feasible.  The following result is a sample implementation of our methods.  We note that other examples with $n=pq$ are similarly easy to generate. 

\begin{prop} \label{pqprop}
For any prime $q$, if $n=3q$, $5q$, or $7q$, then $\disc(F_n) \not \in \Q^{\times 2}$. 
\end{prop}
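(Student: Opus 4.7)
The plan is to prove the proposition by cases on $m \in \{3, 5, 7\}$, following a uniform strategy: apply Theorem~\ref{mainthm2restated} to all but a controlled finite list of primes $q$, and dispatch each remaining exception individually.

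First I would discard the primes $q$ with $mq \not\equiv 1 \pmod{4}$ using Proposition~\ref{easyprop}; this leaves $q \equiv 3 \pmod{4}$ when $m \in \{3,7\}$ and $q \equiv 1 \pmod{4}$ when $m = 5$, together with $q = 2$ which is always disposed of by Proposition~\ref{easyprop}. Next I would compute $X(m)$ and $Y(m)$ explicitly: the harmonic sums give $Y(3) = 11/6$, $Y(5) = 137/60$, and $Y(7) = 363/140$, while the norms $X(m)$ are obtained by expanding the defining product in the basis $\{1, \omega, \ldots, \omega^{m-2}\}$ using the relation $1 + \omega + \cdots + \omega^{m-1} = 0$. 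A short calculation gives $X(3) = 13/36$, and analogous expansions (easily carried out in a computer algebra system) yield explicit rational values of $X(5)$ and $X(7)$. Reading off the primes that divide the numerators or denominators of $X(m)$ and $Y(m)$ determines the finite exceptional set ${\rm E}_m$ on which Theorem~\ref{mainthm2restated} is silent.

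The primes $q$ not yet handled then form a short list: the elements of ${\rm E}_m$ lying in the correct residue class modulo $4$, together with the diagonal case $q = m$, which violates the hypothesis $\gcd(q,m) = 1$ and produces $n = m^2 \in \{9, 25, 49\}$. Each of these three diagonals is an odd square, so $v_\ell(n) = 2 v_\ell(m)$ and $(n-1) v_\ell(L_n)$ are both even for every prime $\ell$; hence $n/L_n^{n-1}$ is a rational square, and it suffices to confirm that the integer $\mathcal{P}_n$ is not a perfect square. For $n = 9$ this is precisely Example~\ref{n=9}; for $n = 25$ and $n = 49$, one factors $\mathcal{P}_n$ directly. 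For each of the remaining finitely many exceptional $q \in {\rm E}_m$ (for example $q = 11$ when $m = 3$, which gives $n = 33$), one computes $\mathcal{P}_{mq}$ and either factors it to rule out squareness or exhibits a prime $\ell$ at which $v_\ell(\disc F_{mq})$ is odd.

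The main obstacle is this last step: as Example~\ref{n=9} already illustrates, the prime factorization of $\mathcal{P}_n$ can be erratic and involve very large primes, so no uniform conceptual argument is available and the finitely many remaining verifications must be carried out by computer algebra. For each fixed $m \in \{3, 5, 7\}$, however, the exceptional list is short enough that this approach remains tractable in practice.
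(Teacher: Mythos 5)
Your overall strategy is the same as the paper's: dispose of $mq\not\equiv 1\pmod 4$ via Proposition~\ref{easyprop}, handle the diagonal cases $n=9,25,49$ separately, invoke Theorem~\ref{mainthm2restated} for all $q$ outside ${\rm E}_m$, and compute $X(m)$, $Y(m)$ explicitly to identify the exceptional primes (your values of $Y(3),Y(5),Y(7)$ and $X(3)$ all match the paper's table, and your residue analysis of which $q$ survive is correct).

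The one place you diverge is the final verification of the exceptional cases, and there your proposed method has a practical defect that the paper deliberately avoids. You suggest either factoring $\mathcal{P}_{mq}$ outright or finding a prime $\ell$ at which $v_\ell(\disc F_{mq})$ is odd; both of these require knowing the factorization of $\mathcal{P}_{mq}$. For $m=5$ the set ${\rm E}_5=\{101,137,3001\}$ forces you to treat $n=15005$, and $\mathcal{P}_{15005}$ is an integer far too large to factor (the paper reports that even $\mathcal{P}_{21}$ took over half an hour, and declares the analogous factorizations for $n=9q$ infeasible). The paper instead picks an auxiliary prime $\ell$ slightly larger than $n$ (e.g.\ $\ell=15017$ for $n=15005$) and checks that $\disc(F_n)\bmod\ell$ is a quadratic non-residue --- a computation that needs no factorization at all and finishes in seconds. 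Your claim that the exceptional list ``remains tractable in practice'' is therefore not justified by the method you describe; you should replace the factor-and-inspect step with a reduction modulo a single well-chosen prime and a Legendre-symbol test. A second, minor point: ${\rm E}_m$ as defined in the paper collects primes with \emph{positive} valuation in $X(m)$ or $Y(m)$; primes dividing the denominators are all at most $m$ and are already excluded by $\gcd(q,m)=1$ together with $q>m$, so including them as you do is harmless but unnecessary.
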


\begin{proof}
If $n \equiv 2,3 \pmod{4}$, we are done by Proposition \ref{easyprop}.  We have seen in Example \ref{n=9} that $\disc(F_9) \not \in \Q^{\times 2}$.  By using the \texttt{issquare} command in \texttt{PariGP} we check that neither $\disc(F_{25})$ nor $\disc(F_{49})$ is a rational square.  

It therefore suffices to consider $n=3q$, $5q$, or $7q$ in the case $n \equiv 1 \pmod{4}$ and $q >3, 5, 7$, respectively.  By Theorem \ref{mainthm2restated}, we are reduced to checking finitely many cases.  In the following table we compute, for each value of $p$, the rational numbers $X(p)$ and $Y(p)$ and determine the set ${\rm E}_p$. We compute

\begin{center}
\begin{tabular}{llll}
$p$ & $X(p)$ & $Y(p)$ & ${\rm E}_p$ \\
\hline
3 & 13/36 &11/6& $\lbrace 11 \rbrace$ \\
5 & $(11\cdot 101 \cdot 3001)/(2^83^45^4)$&$137/(2^2\cdot 3 \cdot 5)$& $\lbrace 101,137,3001 \rbrace$\\
7 & $1170728665999621/(2^{12}3^65^67^6)$&$(3\cdot 11^2)/(2^2 \cdot 5 \cdot 7)$& $\lbrace 11 \rbrace$
\end{tabular}
\end{center}
In each case we find an auxiliary prime $\ell$ for which $\disc(F_n)$ is not a square modulo $\ell$, \emph{viz.}
\begin{align*}
\disc(F_{33}) &\equiv 14 \pmod{37} \\ 
\disc(F_{505}) &\equiv 200 \pmod{509} \\
\disc(F_{685}) &\equiv 443 \pmod{709} \\
\disc(F_{15005}) &\equiv 13652  \pmod{15017} \\
\disc(F_{77}) &\equiv 39 \pmod{79};
\end{align*}
none of these are squares.
\end{proof}

We close with several observations which suggest that a different approach than that discussed in this paper may be necessary in order to prove Yamamura's conjecture in general.  In particular, the condition that $n = mq$ with $q > m$ and $\gcd(m,q) = 1$ is a strong hypothesis that we cannot remove.  What cases are left to prove?  Among all $n \equiv 1 \pmod{4}$, either
\begin{itemize}
\item $n$ is an odd square, or
\item $n$ has at least two distinct prime divisors, and at least one prime divisor at which $n$ has odd valuation.
\end{itemize}
In the second case, Theorem \ref{mainthm2restated} handles values of the form
\[
n = q_1^{e_1} q_2^{e_2} \cdots q_{r-1}^{e_r-1}q_r,
\]
for prime numbers $q_1 < q_2 < \cdots q_r$.  With regard to generalizing the proof of Theorem \ref{mainthm2restated}, we present the following data as evidence that a different approach is needed.

\bigskip 

\begin{enumerate}

\item The condition $q > m$ in Theorem \ref{mainthm2restated} allows us to compute $\widetilde{F}_{mq}(\theta) \pmod{q}$ easily -- the only $a_k$ which are not divisible by $q$ are the $a_{jq}$ for $j=1,\dots,m-1$.  If $q<m$, then we would need to include additional multiples of $q$ less than $m$. 

\medskip

\noindent For example, consider $n=21 = 3\cdot7$.  By Proposition \ref{pqprop}, we know that $\disc(F_{21}) \not \in \Q^{\times 2}$ by working modulo 7.  However, if we were to localize at the prime $3$ instead, we first directly compute in \texttt{PariGP} that $v_{3}(\disc(F_{21})) = -34$, and then separately compute using \eqref{discFmq} that
\[
v_3 \left(\disc(F_{21}) \right) = v_3(21) - 20 v_3(L_{21}) + v_3( \mathcal{P}_{21}) = 1 - 40 + v_{3}(\mathcal{P}_{21}),
\]
whence $v_{3}(\mathcal{P}_{21}) = 5$.  Therefore, it is not the case that if $v_{p}(n)$ is odd then $v_{p} \left(\disc(F_{n})\right)$ is odd as well.  

\medskip

\noindent To finish this example, we note that the prime factorization of $\mathcal{P}_{21}$ is
\begin{align*}
\mathcal{P}_{21} &=3^5 \cdot 31 \cdot 41^2 \cdot 335642497 \cdot  1236257387 \cdot 11513876767 \\
&\times 1381773062083 \cdot 3484835094151 \cdot     2204197718654031818404984907 \\
&\times 9004989137610212635527213226585626310173203221874790587323\\
&\hspace{.17in}6753813403920291816681
\end{align*}
This computation was carried out on \texttt{PariGP} in 38.5 minutes on a personal laptop. \\

\item Setting $m=8$ in Theorem \ref{mainthm2restated} implies $n \equiv 0 \pmod{4$}, hence $m=9$ is the smallest composite value of $m$ for which we can have $n \equiv 1 \pmod{4}$.  In that case we compute
\begin{align*}
X(9) &= \frac{37 \cdot 229 \cdot 367 \cdot 98481394090065580021}{2^{24}3^{16}5^87^8} \\
Y(9) &= \frac{7129}{2^3 \cdot 3^2 \cdot 5 \cdot 7}.
\end{align*}
Thus, if $q \not \in \lbrace 37, 229, 7129, 98481394090065580021 \rbrace$, we can immediately conclude that $\disc(F_{9q}) \not \in \Q^{\times 2}$ (note that $367 \equiv 3 \pmod{4}$).  However, for the remaining values of $q$ we run into some of the computational limits of this question.

\medskip

\noindent Let $q=37$.  One can verify in \texttt{PariGP} that  $v_{37}(\mathcal{P}_{333}) = 37$, which agrees with 
\[
\mathcal{P}_{mq} \equiv \left(\frac{L_{mq}}{q}\right)^{mq-1} X(m)^qY(m)^{q-1} \pmod{q}
\]
from the proof of Theorem \ref{mainthm2restated}. Therefore, none of the primes $<333$ can be used to immediately deduce whether or not $\disc(F_{333})$ is a rational square.  The next prime larger than $333$ is $337$ and we check that $\disc(F_{333}) \equiv 157 \pmod{337}$, which is not a square; this computation took 16.2 seconds on a personal computer.  However, a similar analysis is computationally infeasible for the remaining values of $q$. \\

\item It is not necessarily the case that if $p>n$, then $v_p(\disc(F_n))$ is odd (which would automatically imply $\disc(F_n) \not \in \Q^{\times 2}$).  For example, we have $v_{4019}(\disc(F_{15})) = v_{4019}(\mathcal{P}_{15})= 2$.  \\

\item We checked all odd square values of $n$ from 1 to 1000 and in none of those cases do we find that $\disc(F_{n})$ is a rational square. \\

\end{enumerate}

\end{document}